\newtheorem{theorem}{Theorem}
\newtheorem{lemma}{Lemma}
\newtheorem{Cor}{Corollary}
\newtheorem{obs}{Observation}
\begin{document}
\title{The size of maximal systems of brick islands}
\author{Tom Eccles\thanks{Department of Pure Mathematics and Mathematical Statistics, Centre for Mathematical
Sciences, Wilberforce Road, Cambridge CB3 0WB, England.}}
\maketitle
\begin{abstract}
For integers $m_1,...,m_d>0$ and a cuboid $M=[0,m_1]\times ... \times [0,m_d]\subset \mathbb{R}^d$, a brick of $M$ is a closed cuboid whose vertices have integer coordinates. A set $H$ of bricks in $M$ is a system of brick islands if for each pair of bricks in $H$ one contains the other or they are disjoint. Such a system is maximal if it cannot be extended to a larger system of brick islands. Extending the work of Lengv\'{a}rszky, we show that the minimum size of a maximal system of brick islands in $M$ is $\sum_{i=1}^d m_i - (d-1)$. Also, in a cube $C=[0,m]^d$ we define the corresponding notion of a system of cubic islands, and prove bounds on the sizes of maximal systems of cubic islands.
\end{abstract}
\section{Introduction}

\indent The concept of a \emph{system of rectangular islands} was introduced by Cz\'{e}dli in \cite{Cze}. In \cite{Plu}, Pluh\'{a}r generalised this concept to that of a \emph{system of brick islands} in higher dimensions, a direction mentioned both in \cite{Cze} and by Lengv\'{a}rszky in \cite{Len1}. To introduce these concepts, let $M=[0,m_1]\times ... \times [0,m_d]\subset \mathbb{R}^d$  be a closed cuboid. Then a \emph{brick} of $M$ is a set of the form $[a_1,b_1]\times...[a_d,b_d]$, where for each $1\leq i\leq d$ we have $0\leq a_i < b_i \leq m_i$, and $a_i$, $b_i\in \mathbb{Z}$. A system of brick islands in $M$ is a set $H$ of bricks in $M$ such that whenever $M_1$, $M_2\in H$, either $M_1\subseteq M_2$, $M_2\subseteq M_1$ or $M_1 \cap M_2 = \emptyset$. We denote the set of systems of brick islands in $M$ by $I_M$, and the maximal elements of $I_M$ with respect to inclusion by $Max(I_M)$. When $M$ is $2$-dimensional, a system of brick islands can also be called a system of rectangular islands.

A related concept is that of a \emph{system of square islands}, introduced by \linebreak Lengv\'{a}rszky in \cite{Len2}. For $m$ a positive integer, let $S=[0,m]\times [0,m]$ be a closed square in the plane. A system of square islands in $S$ is a system of rectangular islands $H$ with every rectangle in $H$ being a square. We denote the set of these systems by $I_S$, and the maximal elements of $I_S$ with respect to inclusion by $Max(I_S)$.

We define the higher dimensional analogue of systems of square islands, as suggested in \cite{Len2}. Let $C=[0,m]^d$ be a closed cube in $d$-dimensional space. Then we define a \emph{system of cubic islands} in $C$ to be a system $H$ of brick islands in $C$ such that each brick in $H$ is a cube. We denote the set of these systems by $I_C$, and the maximal elements of $I_C$ with respect to inclusion by $Max(I_C)$.

We shall be concerned with the possible cardinalities of maximal systems of brick and cubic islands. For a cuboid $M=[0,m_1]\times ... \times [0,m_d]$, we define
\begin{equation*}
 f_d(m_1,...,m_d)=max\{\left|H\right|:H\in Max(I_M)\}
\end{equation*}
and 
\begin{equation*}
 g_d(m_1,...,m_d)=min\{\left|H\right|:H\in Max(I_M)\}.
\end{equation*}
Similarly, for a cube $C=[0,m]^d$, we define
\begin{equation*}
 f'_d(m)=max\{\left|H\right|:H\in Max(I_C)\}
\end{equation*}
and
\begin{equation*}
 g'_d(m)=min\{\left|H\right|:H\in Max(I_C)\}.
\end{equation*}

\section{Earlier work}
We summarise the main results of Cz\'{e}dli \cite{Cze}, Lengv\'{a}rszky \cite{Len1}, \cite{Len2} and Pluh\'{a}r \cite{Plu} that relate to our work. All these results concern the possible cardinalities of maximal systems of rectangular, square or brick islands.
In \cite{Cze}, Cz\'{e}dli proved that for systems of rectangular islands in $M=[0,m_1]\times[0,m_2]$,
\begin{equation*}
f_2(m_1,m_2)=\left\lfloor \frac{m_1m_2+m_1+m_2-1}{2}\right\rfloor.
\end{equation*}
In \cite{Len1} and \cite{Len2}, Lengv\'{a}rszky proved
\begin{equation*}
g_2(m_1,m_2)=m_1+m_2-1,
\end{equation*}
and for systems of square islands in $S=[0,m]\times[0,m]$,
\begin{align*}
g_2'(m)&=m \text{, and}\\
f_2'(m)&\leq \frac{m(m+2)}{3}
\end{align*}
with equality in the last being achieved for $k$ a positive integer and $m=2^k-1$.
In \cite{Plu}, Pluh\'{a}r proved that for systems of brick islands in $M=[0,m_1]\times ... \times [0,m_d]$,
\begin{equation*}
\frac{m_1m_2...m_d+\sum m_{j_1}...m_{j_{d-1}}}{2^{d-1}}-1 \leq f_d(m_1,...,m_d)\leq \frac{(m_1+1)...(m_d+1)}{2^{d-1}}-1
\end{equation*}
where the sum in the lower bound runs over the $d-1$ element subsets of \{1,...,d\}.
\section{Our results}
Using methods similar to those employed in \cite{Len1} and \cite{Len2}, we shall prove the following theorems about the possible cardinalities of maximal systems of brick and cubic islands.
\begin{theorem}\label{mincuboids}
Let $M=[0,m_1]\times...\times [0,m_d]\subset \mathbb{R}^d$ be a cuboid. Then the minimal size of a maximal system of cuboid islands in $M$, is given by
\begin{equation*}
g_d(m_1,...,m_d)=\sum_{i=1}^d m_i -(d-1).
\end{equation*}
\end{theorem}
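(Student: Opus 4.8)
For $g_d(m_1,\dots,m_d)\le\sum_i m_i-(d-1)$ I would exhibit a maximal system of exactly this size: the ``staircase'' chain $B_0\subsetneq B_1\subsetneq\cdots\subsetneq B_N$ with $B_0=[0,1]^d$, $B_N=M$, where each $B_{t+1}$ is obtained from $B_t$ by increasing the upper endpoint of one coordinate by $1$ (fill coordinate $1$ up to $m_1$, then coordinate $2$, and so on), so that $N+1=\sum_i(m_i-1)+1=\sum_i m_i-(d-1)$. Being a chain, it is a system of brick islands, and it is maximal: if a brick $B$ were compatible with every $B_t$ without equalling any, then partitioning the indices according to whether $B_t\subsetneq B$, $B\subsetneq B_t$, or $B\cap B_t=\emptyset$ (the first and third classes are downward closed and disjoint, so one is empty; the second is an up-set containing $N$) would force $B$ to be squeezed strictly between two consecutive $B_t$ that differ by a unit slab in a single coordinate, which is impossible, or to have a degenerate coordinate. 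This direction is routine.

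\textbf{Lower bound.} The substance is $|H|\ge\sum_i m_i-(d-1)$ for every $H\in Max(I_M)$, which I would prove by induction on $\sum_i m_i$; the base case $\sum_i m_i=d$ (so $M=[0,1]^d$, $H=\{M\}$) is immediate. Two facts are used throughout: $M\in H$ always, since $M$ is compatible with every brick; and every minimal element of $H$ is a unit cube, because a unit cube contained in a minimal element of $H$ is compatible with all of $H$, so a larger minimal element would contradict maximality. For the inductive step, relabel so that $m_1\ge2$. The plan is to produce a maximal system $H'$ in a cuboid $M'$ with dimensions $(m_1-1,m_2,\dots,m_d)$ satisfying $|H'|<|H|$; the induction hypothesis then gives $|H'|\ge(\sum_i m_i-1)-(d-1)$, hence $|H|\ge\sum_i m_i-(d-1)$. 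Morally $M'$ is obtained by deleting a unit slab of $M$ in the first coordinate and $H'$ by carrying the bricks of $H$ along.

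\textbf{Main obstacle.} Everything hinges on arranging this passage so that it \emph{both} preserves maximality and strictly decreases the number of bricks, and a naive geometric contraction does neither reliably, because it can slide an interior brick onto the deleted face and wreck maximality of the image. The one clean situation is when some face of $M$, say $\{x_1=0\}$, is touched by no brick of $H$ other than $M$: then pushing $M$ inward to $M'=[1,m_1]\times[0,m_2]\times\cdots\times[0,m_d]$ changes no other brick, one deduces from maximality of $H$ that $M'\in H$ (nothing crosses $M'$), so exactly one brick is lost, and any brick addable to $H'$ is addable to $H$, giving maximality of $H'$. The remaining case — every face of $M$ met by some brick other than $M$ — is the crux: I would descend through the laminar family to a minimal brick $B_0=[0,\beta]\times R\in H$ touching $\{x_1=0\}$, which is not $M$; by minimality no child of $B_0$ touches that face, so all children of $B_0$ lie in $[1,\beta]\times R$. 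If $\beta=1$ then $B_0$ is unit-thick and is lost under the deletion; if $\beta\ge2$ then $[1,\beta]\times R$ is compatible with all of $H$ (it contains every child of $B_0$, lies inside $B_0$ and inside everything above $B_0$, and is disjoint from everything disjoint from $B_0$), so maximality forces $[1,\beta]\times R\in H$ and the deletion identifies it with $B_0$; either way a brick is lost. The genuinely delicate step, which I expect to be where the real work lies, is verifying that the resulting $H'$ is still maximal: this is a compatibility check in which the bricks of $H$ touching the deleted face, and the sub-bricks sitting just inside them, must be tracked carefully, and the precise form of the deletion map has to be chosen to make this argument close.
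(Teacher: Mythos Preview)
Your upper bound is the same construction as the paper's, and your maximality check is fine.

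For the lower bound your route diverges sharply from the paper's. The paper does \emph{not} delete a slab and induct on $\sum_i m_i$ alone. Instead it inducts on $d$ (the case $m_i=1$ collapses a dimension) and, once every $m_i\ge 2$, performs a case analysis on $Max(H)$, the set of maximal proper bricks in $H$. The only nontrivial case is when $|Max(H)|>1$ and no element of $Max(H)$ separates $M$; there the paper runs a counting argument along a closed path $P$ through $2d$ edges of $M$ (two opposite edges in each coordinate direction), bounding the total edge length contributed by the bricks of $Max(H)$ meeting $P$ and invoking the inductive hypothesis inside each such brick. No contraction or deletion of $M$ is used.

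Your slab-deletion plan has a genuine gap exactly where you flag it, and it is not merely a bookkeeping issue. Deleting the slab $\{0\le x_1\le 1\}$ can destroy maximality of the image system, because a brick in $M'$ may have been blocked in $H$ solely by bricks that lived inside the deleted slab. Concretely: take $A=[0,1]\times R'$ and $B=[2,a]\times R$ in $H$ with $R,R'$ overlapping but not nested; then $[1,a]\times R$ and $[0,a]\times R$ are both incompatible with $A$ (they meet $A$ along $\{1\}\times(R\cap R')$ or $[0,1]\times(R\cap R')$ without nesting), so neither is in $H$, yet after deletion $A$ vanishes and $[1,a]\times R$ may well be compatible with every surviving brick. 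Thus $H'$ need not lie in $Max(I_{M'})$, and passing to a maximal extension $H''\supseteq H'$ gives no control on $|H''|$ relative to $|H|$. Your ``clean'' sub-case (some face of $M$ met only by $M$) is fine, but the hard sub-case is precisely the one the paper handles by the path argument rather than by any contraction; I do not see a choice of deletion map that makes your argument close without importing a structural lemma of comparable strength to the paper's Corollaries~\ref{nhds}--\ref{gaps}.
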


\begin{theorem}
\label{mincubes}
Let $C=[0,m]^d$ be a cube in $d$-dimensional space. Then the minimal size of a maximal system of cubic islands in $C$ is given by
\begin{equation*}
g'_d(m)=m.
\end{equation*}
\end{theorem}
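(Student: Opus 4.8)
The plan is to prove two inequalities. For the lower bound $g'_d(m) \geq m$, I would argue that any maximal system of cubic islands must contain a chain of length $m$. The key observation is that in $C = [0,m]^d$, consider the sequence of concentric (or nested) cubes: the full cube $[0,m]^d$ has side length $m$, and any maximal system must contain it (the whole cube is always a valid addition compatible with everything). More carefully, I would locate a point $p \in C$ and consider the "smallest island containing $p$" at each relevant scale; a maximal system cannot skip a side length, because if the smallest island containing some unit cube $Q$ has side length $s > 1$, one can typically insert an intermediate cube of side $s-1$ (or side $1$) that is nested inside it and disjoint from or containing every other island. Formally, I would fix a unit cube $U$ in a corner, say $U = [0,1]^d$, and look at the chain of islands containing $U$; maximality forces this chain, together with $U$ itself, to realize every side length from $1$ up to $m$ at the relevant location, giving at least $m$ islands. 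This mirrors Lengvárszky's argument for $g_2'(m) = m$.

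For the upper bound $g'_d(m) \leq m$, I would exhibit an explicit maximal system of size exactly $m$. The natural candidate is the nested family of cubes $C_k = [0,k]^d$ for $k = 1, 2, \dots, m$, anchored at the origin. This is clearly a system of cubic islands since $C_1 \subset C_2 \subset \dots \subset C_m$ is a chain. The work is to verify maximality: I must show that no cube $B \subseteq C$ can be added while preserving the island property. Given a candidate cube $B = [a_1,b_1]\times\dots\times[a_d,b_d]$ with $b_i - a_i = s$ for all $i$, I need $B$ to be comparable with or disjoint from each $C_k$. Since $C_k = [0,k]^d$, the cube $B$ intersects $C_k$ iff $a_i < k$ for all $i$, i.e. iff $\max_i a_i < k$; and $B \subseteq C_k$ iff $\max_i b_i \leq k$; and $C_k \subseteq B$ iff $a_i = 0$ for all $i$ and $b_i \geq k$ for all $i$, i.e. iff $a_i = 0$ and $s \geq k$. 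I would show that for $B$ to be compatible with all the $C_k$ simultaneously, $B$ must already be one of the $C_k$: essentially, if some $a_i > 0$ then taking $k = \max_i a_i$ gives a $C_k$ that meets $B$ but neither contains nor is contained in $B$ (one checks $C_k \not\subseteq B$ since $a_i > 0$ rules out $B$ containing the origin, and $B \not\subseteq C_k$ since $B$ reaches coordinate value $\max_i a_i + s > k$ in that coordinate provided $s \geq 1$, which holds). Hence $a_i = 0$ for all $i$, forcing $B = C_s$.

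I expect the main obstacle to be the lower bound argument: making rigorous the claim that maximality forces a full chain of all $m$ side lengths, and in particular ruling out the possibility of a "flatter" maximal system that uses many small disjoint cubes instead of a long chain. The subtlety is that unlike bricks, cubes are more rigid, so there is less freedom to insert intermediate islands — I would need to carefully show that whenever the chain of islands above a fixed unit cube has a "gap" (two consecutive islands with side lengths $s < t$ and no island of intermediate side length nested between them), one can insert a cube of side length $s+1$ (say, sharing a corner with the smaller island) that is contained in the larger one and disjoint from all other islands; this requires an argument that the larger island has enough room, which follows from $t \geq s+1$. Combining this insertion lemma with maximality yields a chain of $m$ distinct side lengths, hence $|H| \geq m$, completing the proof together with the explicit construction above.
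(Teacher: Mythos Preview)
Your upper bound construction and its verification of maximality are correct and match the paper's (indeed, you supply the details the paper omits).

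The lower bound argument, however, has a genuine gap. Your ``insertion lemma'' --- that whenever two consecutive islands $A\subset B$ above a fixed unit cube $U$ have side lengths $s<t$, one can insert a cube of side $s+1$ between them --- is false, because other islands inside $B$ (disjoint from $A$, hence not in the chain above $U$) can block every such insertion. Already in dimension~$1$ with $m=3$, the system $H=\{[0,3],[0,1],[2,3]\}$ is maximal: any length-$2$ interval meets both $[0,1]$ and $[2,3]$ without containing either. Here the chain above $U=[0,1]$ is just $\{[0,1],[0,3]\}$, skipping side length~$2$ entirely; the required third island comes from the \emph{other} branch $[2,3]$, not from filling the chain. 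So ``maximality forces the chain above $U$ to hit every side length'' is simply not true, and the obstacle you flagged (flat systems using many small disjoint cubes rather than a long chain) is exactly what defeats your proposed fix.

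The paper's lower bound proceeds quite differently: induction on $m$, splitting into cases according to the largest side length occurring in $H\setminus\{C\}$. If it is $m-1$ the induction is immediate; if it is $m-2$ one locates a second maximal island in a free corner; and if every island has side at most $m-3$, one runs a covering/gap argument along a closed edge-path $P$ of length $2md$ (two opposite edges in each coordinate direction) to force either many islands or large total side length among the islands near $P$, either of which gives $|H|\ge m$. This global counting along $P$ is what replaces the chain idea and handles precisely the ``flat'' configurations your argument cannot.
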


\begin{theorem}
\label{maxcubes}
Let $C=[0,m]^d$ be a cube in $d$-dimensional space. Then the maximal size of a system of cubic islands in $C$, is bounded by
\begin{equation*}
f'_d(m)\leq \frac{(m+1)^d-1}{2^d-1}.
\end{equation*}
Moreover, equality can be achieved when $m=2^k-1$ for some positive integer $k$.
\end{theorem}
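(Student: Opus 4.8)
The plan is to prove the upper bound by induction on the side length of the ambient cube, the engine being a lattice‑point counting lemma, and to prove tightness by an explicit recursive construction. First note that since enlarging a system of islands never decreases its size, $\max_{H\in Max(I_C)}|H|=\max_{H\in I_C}|H|$, so it suffices to bound $|H|$ for an arbitrary $H\in I_C$. I would then prove, by strong induction on the integer $s\ge 1$, the following statement: every system of cubic islands all of whose members lie inside a cube $K_0$ of side $s$ has at most $\frac{(s+1)^d-1}{2^d-1}$ elements. Applying this with $K_0=C$ and $s=m$ yields $f'_d(m)\le\frac{(m+1)^d-1}{2^d-1}$. The base case $s=1$ is immediate, since the only cube available is $[0,1]^d$.

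For the inductive step with $s\ge 2$, let $J_1,\dots,J_r$ be the maximal elements of $H\setminus\{K_0\}$; these are pairwise disjoint cubes, each properly contained in $K_0$, and a cube of side $s$ sitting inside $[0,s]^d$ must equal $[0,s]^d$, so each $J_i$ has side $t_i\le s-1$. Writing $H_i$ for the subsystem of $H$ contained in $J_i$, we have $|H|\le 1+\sum_i|H_i|$, and the inductive hypothesis gives $|H_i|\le\frac{(t_i+1)^d-1}{2^d-1}$. Thus it is enough to show $\sum_{i=1}^r\bigl((t_i+1)^d-1\bigr)\le(s+1)^d-2^d$, equivalently $\sum_{i=1}^r(t_i+1)^d\le(s+1)^d-2^d+r$; feeding this back gives exactly $|H|\le 1+\frac{(s+1)^d-2^d}{2^d-1}=\frac{(s+1)^d-1}{2^d-1}$.

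So the whole proof rests on the following \emph{key lemma}: if $J_1,\dots,J_r$ are pairwise disjoint cubes with integer vertices, each properly contained in $[0,s]^d$ and having side $t_i$, then $\sum_{i=1}^r(t_i+1)^d\le(s+1)^d-2^d+r$. This is where essentially all the content lies, and I expect it to be the main obstacle. To prove it, observe that a cube of side $t$ with integer vertices contains exactly $(t+1)^d$ lattice points, and since the $J_i$ are disjoint these lattice‑point sets are disjoint subsets of the $(s+1)^d$ lattice points of $[0,s]^d$. The gain of $2^d-r$ comes from the $2^d$ vertices of $[0,s]^d$: the crucial point is that a cube $J$ properly contained in $[0,s]^d$ contains at most one vertex of $[0,s]^d$. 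Indeed $J$ has side $\le s-1$, so in each coordinate its projection is an interval of length strictly less than $s$, hence contains at most one of the two values $0$ and $s$; since a vertex of $[0,s]^d$ chooses $0$ or $s$ in each coordinate independently, $J$ contains at most one vertex. Consequently the $J_i$ together cover at most $r$ of the $2^d$ vertices, so at least $\max(0,2^d-r)$ vertices of $[0,s]^d$ are uncovered lattice points, and combining this with the disjoint‑lattice‑point count gives $\sum_i(t_i+1)^d\le(s+1)^d-\max(0,2^d-r)\le(s+1)^d-2^d+r$, as required.

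Finally, for the equality statement when $m=2^k-1$, I would construct a system $H_k$ recursively: take $H_1=\{[0,1]^d\}$, and let $H_k$ consist of $[0,2^k-1]^d$ together with a suitably translated copy of $H_{k-1}$ placed inside each of the $2^d$ corner subcubes $\prod_{\ell=1}^d[\,\epsilon_\ell 2^{k-1},\,\epsilon_\ell 2^{k-1}+2^{k-1}-1\,]$ for $\epsilon\in\{0,1\}^d$. These $2^d$ subcubes are pairwise disjoint, since in any coordinate where two sign vectors differ the corresponding intervals $[0,2^{k-1}-1]$ and $[2^{k-1},2^k-1]$ are disjoint ($2^{k-1}-1<2^{k-1}$), and each has side $2^{k-1}-1$, so $H_k$ is a valid system of cubic islands with $|H_k|=1+2^d|H_{k-1}|$, giving $|H_k|=\sum_{j=0}^{k-1}2^{jd}=\frac{2^{kd}-1}{2^d-1}=\frac{(m+1)^d-1}{2^d-1}$. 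By the upper bound already proved, $H_k$ cannot be extended, so it is a maximal system, and hence $f'_d(m)$ attains the claimed value. The only delicate step in the whole argument is the vertex‑counting observation in the key lemma; all remaining steps are routine bookkeeping.
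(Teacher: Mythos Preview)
Your argument is correct, and the construction for $m=2^k-1$ is exactly the one in the paper. The upper bound, however, is organised differently from the paper's, and in fact more cleanly.

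Both proofs rest on the same basic inequality $\sum_i (t_i+1)^d \le (s+1)^d$ for the maximal proper subcubes; the paper obtains it by inflating each $J_i$ by $\tfrac12$ in every direction and comparing volumes inside the inflated ambient cube, while you obtain it by counting lattice points --- these are equivalent for axis-aligned integer cubes. The real difference is in how the extra saving of $2^d$ is extracted. The paper splits into cases on $|Max(H)|$: when $|Max(H)|=1$ it applies induction directly to the single side-$(m-1)$ subcube, and when $|Max(H)|\ge 2$ it first modifies $H$ to a no-smaller system $H'$ by sliding cubes into corners (and inserting elementary cubes where needed) so that every vertex of $C$ is occupied by a distinct maximal cube, forcing $k\ge 2^d$ and hence $\sum_i(a_i+1)^d - k \le (m+1)^d - 2^d$. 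You bypass both the case split and the modification step with the single observation that a proper subcube, having side at most $s-1$, can contain at most one vertex of $[0,s]^d$; this immediately gives $\sum_i(t_i+1)^d \le (s+1)^d - 2^d + r$ for every $r$. Your route is shorter and avoids the somewhat fiddly ``move cubes into the corners'' manoeuvre, at the cost of nothing; the paper's route has the minor advantage that the volume-inflation picture generalises verbatim to non-integer settings, but that is not needed here.
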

\noindent
The rest of this paper will be organised as follows. In section \ref{upperbound}, we shall prove the upper bound for Theorem \ref{mincuboids}. In section \ref{prelim}, we shall make some preliminary observations which will help us in the proof of the lower bound. Then we shall prove the lower bound, in section \ref{maincuboidproof}. In section \ref{extremal}, we shall classify the minimal members of $Max(I_M)$ for a cuboid $M$. In sections \ref{mincubeproof} and \ref{maxcubeproof}, we shall prove Theorems \ref{mincubes} and \ref{maxcubes} respectively.

\section{The upper bound in Theorem \ref{mincuboids}}
\label{upperbound}
To establish one direction of Theorem \ref{mincuboids}, we show that
\begin{equation*}
g_d(m_1,...,m_d)\leq \sum_{i=1}^d m_i -(d-1)
\end{equation*}
by exhibiting a system of brick islands of this size. Indeed, for $M=[0,m_1]\times...\times [0,m_d]$, define a set of bricks $H$ by
\begin{equation*}
H = \{[0,m_1]\times ... \times [0,m_{i-1}]\times [0,n_i] \times [0,1] \times ... \times [0,1]: 1\leq i \leq d, \text{ } 1\leq n_i \leq m_i\}.
\end{equation*}
This defines a system of $\sum_{i=1}^d m_i -(d-1)$ nested bricks. Since each of these bricks extends the last by $1$ in one dimension, $H$ is a maximal system of brick islands in $M$, which establishes our upper bound on $g_d$.

\section{Preliminary results}
\label{prelim}
Working towards the lower bound for $g_d$, we shall start with some observations about maximal systems of brick islands. For $M=[0,m_1]\times...\times [0,m_2]$, we define an \emph{elementary cube} in $M$ to be a cube of the form $[a_1,a_1+1]\times...\times[a_d,a_d+1]$, where for each $1\leq i \leq d$, $a_i \in \mathbb{Z}$ and $0\leq a_i \leq m_i-1$. We call this cube the elementary cube based at $(a_1,a_2,...,a_d)$.

\begin{obs}
Suppose $I\subset[n]$ and $A$ is an elementary cube based at $(a_1,...,a_d)$ such that $a_i = 0$ whenever $i\in I$. Suppose $A'$ is another elementary cube, based at $(a'_1,...,a'_d)$. If $a'_i = a_i$ for $i\notin I$, and $a'_i \in \{0,1\}$ for $i\in I$, then every elementary cube that intersects $A$ also intersects $A'$.
\end{obs}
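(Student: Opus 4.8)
The claim is essentially a combinatorial statement about which cells (elementary cubes) in the grid $\{0,\dots,m_1-1\}\times\cdots\times\{0,\dots,m_d-1\}$ meet a given cell. The plan is to reduce to a one-coordinate-at-a-time argument. Two elementary cubes, based at $(a_1,\dots,a_d)$ and $(b_1,\dots,b_d)$ respectively, intersect precisely when $|a_i-b_i|\le 1$ for every $i$ — indeed in coordinate $i$ the intervals $[a_i,a_i+1]$ and $[b_i,b_i+1]$ overlap iff $|a_i-b_i|\le 1$, and a product of closed intervals is nonempty iff each factor is nonempty. So I would first record this criterion as the working characterisation of "intersects''.

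With that in hand, let $A$ be based at $(a_1,\dots,a_d)$ with $a_i=0$ for $i\in I$, let $A'$ be based at $(a'_1,\dots,a'_d)$ with $a'_i=a_i$ for $i\notin I$ and $a'_i\in\{0,1\}$ for $i\in I$, and let $E$ be an arbitrary elementary cube, based at $(e_1,\dots,e_d)$, that intersects $A$. I want to show $E$ intersects $A'$, i.e. $|e_i-a'_i|\le 1$ for all $i$. For $i\notin I$ this is immediate since $a'_i=a_i$ and $|e_i-a_i|\le 1$ by hypothesis. For $i\in I$ we have $a_i=0$, so $|e_i-0|\le 1$ forces $e_i\in\{0,1\}$ (recall $e_i\ge 0$ as it is a valid base coordinate). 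Then $a'_i\in\{0,1\}$ and $e_i\in\{0,1\}$ together give $|e_i-a'_i|\le 1$. Hence $E$ meets $A'$, which is exactly the assertion.

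The argument is short and there is no real obstacle; the only point needing a little care is the reduction to the coordinatewise distance criterion (checking the "nonempty product of intervals'' equivalence) and the observation that base coordinates are nonnegative, so that $|e_i|\le 1$ genuinely restricts $e_i$ to $\{0,1\}$ rather than $\{-1,0,1\}$. I would state the coordinatewise criterion as a one-line lemma (or inline remark) before the proof, and then the proof of the observation is the two-case split above. Note also that the hypothesis "$I\subset[n]$'' in the statement should read $I\subseteq[d]$ (or $I\subseteq\{1,\dots,d\}$); I would use $[d]$ throughout for consistency with the ambient dimension.
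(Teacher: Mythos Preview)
Your argument is correct. The paper states this result as an observation without proof, so there is nothing to compare against; your coordinatewise criterion $|e_i-a'_i|\le 1$ together with the nonnegativity of base coordinates is exactly the elementary verification the paper has in mind, and your remark that $[n]$ should read $[d]$ is also apt.
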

For a system of brick islands $H$ in $M$, let $Max(H)$ be the set of maximal elements of $H\backslash\{M\}$ with respect to inclusion.
\begin{Cor}
\label{nhds}
Let $M=[0,m_1]\times...\times [0,m_d]$ and let $H\in Max(I_M)$.  Suppose that $\left|Max(H)\right|>1$ and $R=[r_{1,1},r_{1,2}]\times ... \times [r_{d,1},r_{d,2}]\in Max(H)$. Then no $r_{i,1}$ is $1$, and no $r_{i,2}$ is $m_i-1$,
\end{Cor}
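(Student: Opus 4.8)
The plan is to argue by contradiction: assuming that some $r_{i,1}=1$ or some $r_{i,2}=m_i-1$, I would produce a brick that can be added to $H$, contradicting $H\in Max(I_M)$. Since the coordinate reflection $x_i\mapsto m_i-x_i$ interchanges the two alternatives, it suffices to treat the first; so fix $i$ with $r_{i,1}=1$ and let $R'$ be the brick obtained from $R$ by replacing its $i$-th coordinate interval $[1,r_{i,2}]$ with $[0,r_{i,2}]$ and leaving all other intervals unchanged. The two things to establish are that $H\cup\{R'\}\in I_M$ and that $R'\notin H$; together these contradict the maximality of $H$.

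For the first, note that $M$ is comparable with every brick, so $M\in H$, and that $R$ is a maximal element of $H\setminus\{M\}$; hence each $B\in H$ satisfies $B\subseteq R$, $B=M$, or $B\cap R=\emptyset$. In the first two cases $B$ is comparable with $R'$, using $R\subseteq R'\subseteq M$, so the crux is to show $B\cap R=\emptyset\Rightarrow B\cap R'=\emptyset$, and this is exactly where $r_{i,1}=1$ is used: $R'$ is the union of $R$ with a single slab of elementary cubes, all of which have $i$-th base coordinate $0$. If $B\cap R'\neq\emptyset$, then $B$ and $R'$ share an integer point and hence contain elementary cubes $F\subseteq B$ and $E\subseteq R'$ that meet; if $E\not\subseteq R$ then $E$ lies in that slab, so its $i$-th base coordinate is $0$, and since $r_{i,1}=1$ the cube $A'$ obtained from $E$ by changing that coordinate to $1$ is contained in $R$. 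Applying Observation~1 with $I=\{i\}$ to $E$ and $A'$ then shows $F$ meets $A'$, whence $B\cap R\neq\emptyset$; and if instead $E\subseteq R$ we already have $B\cap R\neq\emptyset$. Either way this contradicts $B\cap R=\emptyset$, so $B\cap R'=\emptyset$ and $H\cup\{R'\}\in I_M$. (Alternatively one can check $B\cap R=\emptyset\Rightarrow B\cap R'=\emptyset$ by hand, comparing the $i$-th coordinate intervals of $B$ and $R$ and using that $R$'s left endpoint there is $1$; Observation~1 just packages this.)

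For the second, since $R'\supsetneq R$ and $R$ is maximal in $H\setminus\{M\}$, the only member of $H$ that could equal $R'$ is $M$; but $R'=M$ forces $R=[0,m_1]\times\cdots\times[1,m_i]\times\cdots\times[0,m_d]$, and then no brick of $M$ is disjoint from $R$, so every member of $H\setminus\{M\}$ is contained in $R$ and $Max(H)=\{R\}$, contradicting $|Max(H)|>1$. Hence $H\cup\{R'\}$ properly extends $H$; this shows no $r_{i,1}$ is $1$, and the statement about $r_{i,2}=m_i-1$ follows by the reflection. I expect the main obstacle to be the middle paragraph — turning ``$B$ meets the enlarged brick $R'$'' into ``$B$ already meets $R$'' — and in particular arranging the elementary cubes so that Observation~1 applies with the right index set.
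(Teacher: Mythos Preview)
Your proof is correct and follows essentially the same route as the paper's: define $R'$ by extending $R$ to the boundary in the offending coordinate, use Observation~1 to show that anything meeting $R'$ already meets $R$, and rule out $R'\in H$ via the hypothesis $|Max(H)|>1$. Your write-up is more careful than the paper's in two places---you make explicit why comparability with every $B\in H$ (not just with elements of $Max(H)$) follows, and you spell out how to pass from ``$B$ meets $R'$'' to elementary cubes so that Observation~1 applies---but the underlying argument is the same.
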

\begin{proof}
Indeed, suppose $r_{1,1}=1$. Let $R'$ be $[0,r_{1,2}]\times ... \times [r_{d,1},r_{d,2}]$. From the observation above, any elementary cube intersecting $R'$ intersects $R$, and hence no element of $Max(H)\backslash\{R\}$ intersects $R'$. The brick $R'$ cannot be in $H$ already as then we would have $R'=M$ and $\left|Max(H)\right| = 1$. This shows that $H$ is not maximal, since we can add $R'$ to it, which is a contradiction. 
\end{proof}
\begin{Cor}
Let $M=[0,m_1]\times...\times [0,m_d]$ and let $H\in Max(I_M)$. If $\left| Max(H)\right| > 1$, every vertex of $M$ is occupied by a member of $Max(H)$.
\end{Cor}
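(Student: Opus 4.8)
The plan is to deduce this from Corollary \ref{nhds} by a short extremality argument, in exactly the spirit of the proof of that corollary. By reflecting the coordinates of $\mathbb{R}^d$ one at a time (each reflection $x_i \mapsto m_i - x_i$ carries $H$ to a maximal system of brick islands in a congruent cuboid and preserves $|Max(H)|$), it suffices to treat the vertex $(0,\dots,0)$. So the goal becomes: some member of $Max(H)$ has the form $[0,b_1]\times\dots\times[0,b_d]$.

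I would argue by contradiction. Suppose no member of $Max(H)$ contains the origin, and let $A$ be the elementary cube based at $(0,\dots,0)$. First dispose of the degenerate possibility $A = M$: in that case the only brick in $M$ is $M$ itself, so $H = \{M\}$ and $|Max(H)| = 0$, contradicting $|Max(H)| > 1$. Hence $A \neq M$, and $A$ is a genuine brick properly contained in $M$.

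The key step is to show that $M$ is the only member of $H$ that meets $A$. Given $B \in H$ with $B \neq M$ and $A \cap B \neq \emptyset$, pick $R = [r_{1,1},r_{1,2}]\times\dots\times[r_{d,1},r_{d,2}] \in Max(H)$ with $B \subseteq R$ (possible since $B \in H\setminus\{M\}$ and $H$ is finite). Then $A \cap R \neq \emptyset$, which forces $r_{i,1} \leq 1$ for every $i$; since the $r_{i,1}$ are nonnegative integers, each lies in $\{0,1\}$. Now Corollary \ref{nhds} applies, since $|Max(H)| > 1$, and rules out $r_{i,1} = 1$; so $r_{i,1} = 0$ for every $i$, and $R$ contains the origin, contrary to our assumption. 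Hence no such $B$ exists. In particular $A \notin H$ (no brick is strictly inside an elementary cube, so $A$ meeting only $M$ within $H$ means $A \notin H$), and therefore $H \cup \{A\}$ is again a system of brick islands, strictly larger than $H$ — contradicting the maximality of $H$. This contradiction shows some member of $Max(H)$ contains the origin, and the reflection reduction then gives the statement for every vertex of $M$.

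I do not anticipate any real obstacle: all of the geometric content is already packaged in Corollary \ref{nhds}, and the only points needing a moment's care are the coordinate-reflection reduction at the start and the harmless degenerate case $A = M$.
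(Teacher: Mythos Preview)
Your argument is correct and is essentially the paper's own proof, just written out in more detail: both take the elementary cube $C_v$ at a vertex, use maximality of $H$ to force some $R\in Max(H)$ to meet $C_v$, and then invoke Corollary~\ref{nhds} to rule out $r_{i,1}=1$ (resp.\ $r_{i,2}=m_i-1$) and conclude $R$ actually contains $v$. The paper compresses this into three sentences and treats a general vertex directly rather than reducing by reflections, but the underlying idea is identical.
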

\begin{proof}
Given a vertex $v$ of the cuboid $M$, let $C_v$ be the elementary cube which contains $v$. As $H$ is maximal, $Max(H)$ contains some brick $R$ which intersects $C_v$. By the previous result, $R$ must contain $C_v$.
\end{proof}

\begin{Cor}
\label{gaps}
Let $M=[0,m_1]\times...\times [0,m_d]$ and let $H\in Max(I_M)$. Suppose $\left| Max(H)\right| > 1$ and $R_1$, $R_2$ are bricks in $Max(H)$ which intersect an edge $E$ of the cube. There is some section of $E$ between the intersections of $R_1$ and $R_2$ with $E$ - we shall call this the gap between $R_1$ and $R_2$ on $E$. Suppose that no other member of $Max(I_M)$ intersects this gap. Then the length of the gap is at most $2$. Further, if the length of the gap is exactly $2$, neither of $R_1$, $R_2$ is an elementary cube.
\end{Cor}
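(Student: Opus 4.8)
The plan is to prove both assertions by contradiction: if the gap has length $\ge3$, or if it has length exactly $2$ while one of $R_1,R_2$ is an elementary cube, I will produce a new brick that can be added to $H$, contradicting its maximality. First I normalise the picture. Permuting the coordinate axes and reflecting individual coordinates are symmetries of the problem, so I may assume the edge is $E=[0,m_1]\times\{0\}\times\dots\times\{0\}$. Since $R_1$ and $R_2$ meet $E$, the lowest corner of each in every coordinate $i\ge2$ is $0$; write the first-coordinate interval of $R_j$ as $[p_j,q_j]$ and its $i$th interval as $[0,c_{j,i}]$ for $i\ge2$, with $c_{j,i}\ge1$. Distinct members of $Max(H)$ are disjoint, so the traces of $R_1,R_2$ on $E$ are disjoint intervals; after possibly swapping names, $q_1<p_2$, and the gap is $[q_1,p_2]\times\{0\}^{d-1}$, of length $\ell=p_2-q_1\ge1$. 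If moreover $R_1$ is an elementary cube, then in addition $q_1=p_1+1$ and $c_{1,i}=1$ for all $i\ge2$.

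The key step is a localisation claim: \emph{every $R_3\in Max(H)$ other than $R_1$ and $R_2$ is disjoint from the box $Q:=[q_1-1,\,p_2+1]\times[0,1]^{d-1}$.} To prove it, suppose some such $R_3$ meets $Q$. Since $Q$ has extent $[0,1]$ in each coordinate $i\ge2$, the lowest corner of $R_3$ there is at most $1$, hence is not $1$ by Corollary \ref{nhds}, hence is $0$; so $R_3$ contains a full segment $[\alpha_1,\beta_1]\times\{0\}^{d-1}$ of $E$ and, in every coordinate $i\ge2$, overlaps both $R_1$ and $R_2$. Thus the disjointness of $R_3$ from $R_1$ and from $R_2$ must be witnessed in the first coordinate, i.e. $[\alpha_1,\beta_1]$ misses both $[p_1,q_1]$ and $[p_2,q_2]$. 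A short case analysis on the position of $[\alpha_1,\beta_1]$ (entirely left of $p_1$; entirely right of $q_2$; or strictly between $q_1$ and $p_2$) shows that in the first two cases $[\alpha_1,\beta_1]$ in fact misses $[q_1-1,p_2+1]$, so $R_3$ does not meet $Q$ after all, while in the third case $R_3$ contains a point of the gap, contradicting the hypothesis. This establishes the claim.

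Granted the claim, in each case I exhibit a brick $R'\subseteq Q$ that is a proper sub-brick of $M$, is comparable with or disjoint from both $R_1$ and $R_2$, and is not already in $H$. By the claim such an $R'$ is disjoint from every other member of $Max(H)$; and since $M\supseteq R'$ while every other brick of $H$ lies inside some member of $Max(H)$, it follows that $H\cup\{R'\}$ is still a system of brick islands, contradicting $H\in Max(I_M)$. If $\ell\ge3$ I take $R'=[q_1+1,q_1+2]\times[0,1]^{d-1}$, which sits strictly inside the gap region (hence inside $Q$), starts past $q_1$ (hence is disjoint from $R_1$), and ends before $p_2=q_1+\ell$ (hence is disjoint from $R_2$); this contradiction forces $\ell\le2$. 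If $\ell=2$ and (after reflecting the first coordinate if necessary) $R_1$ is an elementary cube, so $R_1=[p_1,p_1+1]\times[0,1]^{d-1}$ and $p_2=p_1+3$, I take $R'=[p_1,p_1+2]\times[0,1]^{d-1}$, which properly contains $R_1$, lies inside $Q$, and ends at $p_1+2<p_2$ (hence is disjoint from $R_2$). The leftover verifications — that each $R'$ is a genuine brick of $M$, that $R'\ne M$, and that $R'\notin H$ (since $R'$ cannot be contained in any member of $Max(H)$) — are routine. I expect the localisation claim to be the only real difficulty: the width of $Q$ must be chosen to be simultaneously wide enough to house the brick added in each case and narrow enough that Corollary \ref{nhds} together with disjointness from $R_1,R_2$ forces any intruding $R_3$ into the forbidden gap, and one must check that the interval case analysis is exhaustive; everything else is bookkeeping with integer intervals.
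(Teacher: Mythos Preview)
Your proof is correct and follows essentially the same strategy as the paper: normalise the edge to lie along the first axis, use Corollary~\ref{nhds} to force any nearby member of $Max(H)$ to reach the edge $E$, and then exhibit a brick (the elementary cube in the middle of the gap for $\ell\ge 3$, the one-step extension of $R_1$ for $\ell=2$) that can be added to $H$. Your localisation claim about $Q=[q_1-1,p_2+1]\times[0,1]^{d-1}$ is a clean explicit formulation of what the paper argues more informally (its ``applying Corollary~\ref{nhds} three times''), and the bricks $R'$ you construct coincide with the paper's in both cases.
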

\begin{proof}
We may assume that $E=\{(x,0,...,0):0\leq x \leq m_1\}$. Suppose there is a gap of at least $3$ between $R_{1}$, $R_{2}$ on $E$ - so no member of $Max(H)$ intersects $\{(x,0,0,...,0):a<x<a+3\}$, for some integer $1 \leq a \leq n-4$. Then, by applying Corollary \ref{nhds} three times, we see that the elementary cube based at $(a+1,0,0,...0)$ intersects no member of $Max(H)$ - otherwise this member of $Max(H)$ would also intersect the gap between $R_1$ and $R_2$ on $E$. This gives rise to a contradiction - $H$ is not maximal, as we can add this elementary cube to it.

Now, suppose we have a gap of length $2$ between $R_1$ and $R_2$ on $E$ - so that $(a,0,...0)\in R_1$, $(a+2,0,...,0)\in R_2$, and no member of $Max(H)$ intersects $\{(x,0,0,...,0):a<x<a+2\}$, for some integer $1 \leq a \leq n-3$. If $R_1$ was an elementary cube, then we extend it to $R_1'$ by adding in the elementary cube based at $(a,0,0,...0)$. By Corollary \ref{nhds} applied to this elementary cube and the elementary cube based at $(a+1,0,...0)$, $R_1'$ intersects no elements of $Max(H)$ other than $R$. This shows that we can add $R_1'$ into $H$, contradicting its maximality.
\end{proof}
\begin{obs}
\label{subsystems}

Let $M=[0,m_1]\times...\times [0,m_d]$, $H\in Max(I_M)$ and $R\in H$. Then those bricks in $H$ which are contained in $R$ form a set in $Max(I_R)$. Also, $M$ itself must be in $H$. In particular, if $R_1,...,R_k$ are members of $Max(H)$, where $R_i$ has side length $r_{ij}$ in dimension $j$, then
\begin{equation*}
\left|H\right| \geq 1+\sum_{i=0}^k g_d(r_{i1},...,r_{id})
\end{equation*}
\end{obs}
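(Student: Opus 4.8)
The plan is to prove the three assertions in turn, the cardinality bound being pure bookkeeping once the first two are in hand. I would begin with the easiest, that $M\in H$: the brick $M=[0,m_1]\times\cdots\times[0,m_d]$ contains every brick of $M$, so it satisfies the island condition against every brick; hence $H\cup\{M\}\in I_M$, and maximality of $H$ forces $M\in H$.

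Next I would treat the main claim, that $H_R:=\{R'\in H:R'\subseteq R\}$ lies in $Max(I_R)$. That $H_R\in I_R$ is immediate: the island condition is hereditary, and a brick of $M$ contained in $R$ is a brick of $R$. For maximality I would argue by contradiction. Suppose a brick $R''\subseteq R$ with $R''\notin H_R$ can be adjoined to $H_R$ inside $R$; I claim it can also be adjoined to $H$ inside $M$. This is a short case split on an arbitrary $R'\in H$: if $R'\subseteq R$ then $R'\in H_R$ and compatibility of $R'$ with $R''$ is given; if $R'\not\subseteq R$ then the island condition applied to $R,R'\in H$ yields either $R\subseteq R'$, whence $R''\subseteq R'$, or $R\cap R'=\emptyset$, whence $R''\cap R'=\emptyset$. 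So $H\cup\{R''\}\in I_M$ while $R''\notin H$, contradicting maximality of $H$; therefore $H_R\in Max(I_R)$.

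For the inequality, take distinct $R_1,\dots,R_k\in Max(H)$. Each $R_i$ is a proper subbrick of $M$, so $M\notin H_{R_i}$; and for $i\neq j$ the bricks $R_i,R_j$ are incomparable members of $H\setminus\{M\}$, hence disjoint by the island condition, so no brick lies in both and $H_{R_i}\cap H_{R_j}=\emptyset$. Thus $\{M\},H_{R_1},\dots,H_{R_k}$ are pairwise disjoint subsets of $H$, giving $|H|\geq 1+\sum_{i=1}^k|H_{R_i}|$. Since $H_{R_i}\in Max(I_{R_i})$ and $R_i$ is a translate of $[0,r_{i1}]\times\cdots\times[0,r_{id}]$, translation invariance of the construction yields $|H_{R_i}|\geq g_d(r_{i1},\dots,r_{id})$, and the claimed bound follows.

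I do not anticipate a real obstacle. The only point needing care is the lifting step in the middle paragraph — verifying that an admissible extension of $H_R$ within $R$ stays admissible as an extension of $H$ within $M$ — together with the pedestrian but necessary remark that $g_d$ is translation invariant, so that it bounds $|H_{R_i}|$ even though $R_i$ need not be anchored at the origin.
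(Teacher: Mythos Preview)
Your argument is correct in all three parts; the lifting step and the disjointness of the $H_{R_i}$ are handled cleanly, and the remark on translation invariance is a fair point to make explicit. The paper itself offers no proof of this observation, treating it as self-evident, so there is no alternative approach to compare against; what you have written is exactly the natural verification one would supply.
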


\section{Proof of the lower bound in Theorem \ref{mincuboids}}
\label{maincuboidproof}
Now we are ready to prove the main theorem. Given $H\in Max(I_M)$, our task is to show that $\left|H\right|\geq \sum_{i=1}^d m_i -(d-1)$. We shall proceed by induction on $d$, and within this by induction on $\sum_{i=1}^d m_i$. First, we establish a slightly stronger result for $d=1$.

\begin{lemma}
\label{onedim}
Let $M=[0,m]\subset \mathbb{R}$ be a line segment. Then every maximal system of cuboid islands in $M$ has size $m$. In particular, $g_1(m)=m$.
\end{lemma}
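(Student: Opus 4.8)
The plan is to prove Lemma~\ref{onedim} directly, since in one dimension the combinatorics is completely rigid. Let $H\in Max(I_M)$ for $M=[0,m]$. A brick of $M$ is just a closed interval $[a,b]$ with integer endpoints and $0\le a<b\le m$, and two such intervals are either nested or disjoint. The key structural fact is that $M=[0,m]$ itself must lie in $H$: if it did not, then since no brick can properly contain an element of $H$ while still being incomparable to all others except by containing them, one checks that $[0,m]$ is disjoint from nothing and contains everything in $H$, so $H\cup\{[0,m]\}$ is still a system of brick islands, contradicting maximality. (Equivalently this is the $d=1$ case of Observation~\ref{subsystems}.)

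Next I would show that every elementary cube (here, every unit interval $[i,i+1]$ for $0\le i\le m-1$) is contained in some element of $H$. Indeed, if $[i,i+1]$ were contained in no member of $H$, then $[i,i+1]$ is disjoint from every member of $H$ that does not contain it, and contained in no member of $H$, so any member of $H$ meeting $[i,i+1]$ would have to contain it --- hence none meets it, and we could add $[i,i+1]$ to $H$, contradicting maximality. So the $m$ unit intervals are each ``covered'' by $H$.

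Now I set up the counting. I claim $|H|=m$ by induction on $m$. For $m=1$ the only maximal system is $\{[0,1]\}$, of size $1$. For $m>1$, consider $Max(H)$, the maximal elements of $H\setminus\{[0,m]\}$; these are pairwise disjoint intervals $[a_1,a_2],[a_2',a_3],\dots$ (in order along $[0,m]$), and by the ``covering'' observation together with maximality there can be no gap of length $\ge 1$ between consecutive ones nor at the ends --- otherwise a uncovered unit interval would appear. Hence $Max(H)$ partitions $[0,m]$ into $k$ consecutive intervals $R_1,\dots,R_k$ of integer lengths $\ell_1,\dots,\ell_k$ with $\sum\ell_j=m$. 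If $k=1$ then $R_1=[0,m]$ is impossible (it equals $M$), so actually $k\ge 2$ unless $H=\{[0,m]\}$ which forces $m=1$; in the remaining case each $\ell_j<m$. By Observation~\ref{subsystems} the members of $H$ inside $R_j$ form a maximal system in $R_j$, of size $g_1(\ell_j)=\ell_j$ by the induction hypothesis, and these subsystems together with $[0,m]$ itself account for all of $H$ disjointly, so $|H|=1+\sum_{j=1}^k(\ell_j)-(k-1)$? --- no: each subsystem already contains its own top element $R_j$, and they share nothing, so $|H|=1+\sum_{j=1}^k \ell_j = 1+m$ would be wrong; the fix is that I should \emph{not} add $1$ for $[0,m]$ and separately count, but rather note $H=\{[0,m]\}\sqcup\bigsqcup_j H_j$ where $H_j$ is the subsystem strictly below $R_j$ has size $\ell_j$, giving $|H|=1+\sum_j\ell_j$ only if $k\ge 2$ and no $R_j$ equals $M$; since $\sum\ell_j=m$ this would give $m+1$, contradicting the target. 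The genuine resolution, which I will carry out carefully, is that the recursion must be stated as $g_1(m)=g_1(\ell_1)+g_1(m-\ell_1)$ by splitting at a single cut point, giving $g_1(m)=g_1(1)+\dots+g_1(1)=m$ by an immediate induction, \emph{or} simply to observe directly that $H$ is a laminar family on $[0,m]$ all of whose $m$ ``leaf'' unit intervals are covered and that such a maximal laminar family on a path of length $m$ has exactly $m$ members --- this is the cleanest route and avoids the bookkeeping pitfall above.

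The main obstacle is precisely this bookkeeping: getting the recursion to telescope to exactly $m$ rather than $m+1$ or $2m-1$. The safe way is to avoid the ``$1+\sum g_d$'' form of Observation~\ref{subsystems} (which only gives a lower bound and double-counts tops) and instead argue that a maximal laminar family on the ground set $\{[i,i+1]:0\le i<m\}$ refining down from $[0,m]$ is a full binary-style tree whose every leaf is a unit interval, and that the number of nodes of a rooted tree with $m$ leaves in which every internal node has $\ge 2$ children, \emph{plus} the requirement that the family be maximal (forcing each internal node's children to partition it and adjacent children to be adjacent), pins the count down; in fact maximality forces the structure to be a ``caterpillar'' and an easy induction on $m$ on this structure gives $|H|=m$. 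I will present the induction-on-$m$ version with a single split point, which is both correct and short.
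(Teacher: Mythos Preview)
Your proposal contains a genuine error that derails the argument. You claim that the elements of $Max(H)$ partition $[0,m]$ into consecutive intervals with $\sum_j \ell_j = m$, but this is false: two disjoint bricks in one dimension cannot abut, since $[0,a]$ and $[a,m]$ share the point $a$ and are neither nested nor disjoint. So consecutive members of $Max(H)$ look like $[0,a]$ and $[a+1,m]$, leaving the unit interval $[a,a+1]$ uncovered by $Max(H)$. That unit interval cannot be added to $H$ (it meets both neighbours in a single point), so its presence does not contradict maximality. Consequently, for $k$ members of $Max(H)$ the lengths sum to $m-(k-1)$, not $m$, and this is precisely why your count overshot to $m+1$. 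Your later claim that $k=1$ forces $R_1=[0,m]$ is also wrong: $k=1$ occurs with $R_1=[0,m-1]$ or $[1,m]$. And your proposed fixes do not work as stated: the recursion $g_1(m)=g_1(\ell_1)+g_1(m-\ell_1)$ drops the $+1$ for $M$ itself and uses the wrong second length, while the ``caterpillar'' claim is false since the containment tree genuinely can branch.

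What is missing is the observation that $|Max(H)|\le 2$. If $Max(H)$ contained three consecutive intervals $[0,a],[a+1,b],[b+1,m]$, then $[0,b]$ contains the first two, is disjoint from the third, and is comparable with everything else in $H$, so it could be added --- contradicting maximality. With this in hand the paper's proof is immediate: either $Max(H)=\{[0,m-1]\}$ (up to reflection), giving $|H|=1+(m-1)=m$, or $Max(H)=\{[0,a],[a+1,m]\}$, giving $|H|=1+a+(m-a-1)=m$. Once you correct the length sum and bound $k$, your argument becomes exactly this.
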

\begin{proof}
We prove this by induction on $m$. For $m=1$, the result is trivial. For $m\geq 2$ suppose that $H\in Max(I_M)$. There are two different forms that $Max(H)$ can take - either it is a single interval $[0,m-1]$, or $Max(H)=\{[0,a],[a+1,m]\}$ for some integer $1\leq a \leq m-1$. In either case, we use Observation \ref{subsystems} and apply the induction hypothesis to the members of $Max(H)$. This shows that $H$ consists of $m-1$ elements contained in one of the members of $Max(H)$, together with $M$ itself, and so $\left| H \right| =m$.
\end{proof}

In $d$ dimensions, our base case is when any side length $m_i$ of $M$ is $1$. In this case, the problem reduces immediately to the $(d-1)$-dimensional case. Using this, we shall assume that $m_i\geq 2$ for all $i$, and that the theorem holds whenever $\sum_{i=1}^dm_i$ is reduced. We shall now proceed in three different ways, depending on the configuration of $Max(H)$ inside $M$. The first two cases deal with special configurations which can arise when $\left|Max(H)\right|$ is small.

\subsection{Case 1: $ \left|Max(H)=1\right|$}
Without loss of generality, 
\begin{equation*}
Max(H)=\{[0,m_1-1]\times [0,m_2]...\times [0,m_d]\}
\end{equation*}
Applying the induction hypothesis to the sole member of $Max(H)$, and using Observation \ref{subsystems}, we find that
\begin{equation*}
\left|H\right| \geq 1+g_d(m_1-1,m_2,...,m_d) = \sum_{i=1}^d m_i +(d-1)
\end{equation*}
We note that in this case we can get equality.

\subsection{Case 2: $\left|Max(H)\right|>1$, and $Max(H)$ has an element which divides $M$ into $2$ or more regions}
Let $R$ be a member of $Max(H)$ which divides $M$, with
\begin{equation*}
R=[r_1,r_2]\times [0,m_2]\times...\times [0,m_d].
\end{equation*}
Then by Corollary \ref{nhds} $r_{1}\neq 1$ and $r_{2}\neq m_{1}-1$. If $r_{1} = 0$, then we use observation \ref{subsystems} and apply the induction hypothesis in $R$ and in $R=[r_{2}+1,m_1]\times [m_2] \times ... [m_d]$, which must be the sole other member of $Max(H)$.This gives
\begin{align*}
\left| H \right|\geq& 1+\left (\sum_{i=1}^d m_i - m_1 +r_{2} -(d-1)\right) \\
&+\left(\sum_{i=1}^d m_i -m_1+ (m_1-r_{2}-1)-(d-1)\right)\\
=& \left(\sum_{i=1}^d m_i -(d-1)\right)+\left(\sum_{i=1}^d m_i - m_1-(d-1)\right).
\end{align*}
As every $m_i$ is at least $2$, this shows that $\left| H\right|$ larger than we claim for Theorem \ref{mincuboids} - and so equality cannot hold in this case.

If, on the other hand, $1<r_1<r_2<n-1$, then we must have that
\begin{equation*}
Max(H) = \{R, [r_2+1,m_1]\times [m_2] \times ... [m_d],[0,r_1-1]\times [m_2] \times ... [m_d]\}
\end{equation*} 
Using Observation \ref{subsystems} and applying the induction hypothesis to each of these three bricks, we find that
\begin{align*}
\left| H \right| \geq \left(\sum_{i=1}^d m_i -(d-1)\right)+\left(2\sum_{i=1}^d m_i -2 m_1-2(d-1)-1\right).
\end{align*}
Again, using the fact that each $m_i$ is at least $2$, this gives the bound we require for $\left| H\right|$ with strict inequality.

\subsection{Case 3: $\left|Max(H)\right|>1$, and no element of $Max(H)$ divides $M$ into $2$ regions}
We define a path $P$ around some edges of the cuboid by
\begin{align*}
P = \{(0,0,...,0,x_i,m_{i+1},...,m_d):1\leq i \leq d, 0\leq x_i\leq m_i\}\\
 \cup \{(m_{1},m_{2},...,m_{i-1},x_i,0,...,0):1\leq i \leq d, 0\leq x_i\leq m_i\}
\end{align*}
We note that $P$ has two edges in each direction, and that these edges are diametrically opposite each other in $M$. Hence no brick in $Max(H)$ intersects both of these edges, or else it would divide $M$. The length of $P$ is $2\sum_{i=1}^d m_{i}$, and $P$ has $2d$ corners with $2$ edges incident at each.

Now, consider all the members of $Max(H)$ which intersect $P$. Suppose there are $k$ of them, $A_{1},...,A_{k}$, with the $j$th dimension edge length of $A_{i}$ being denoted $a_{ij}$. By Corollary \ref{gaps}, the gaps between consecutive bricks on $P$ are at most $2$. Writing $n_2$ as the number of gaps of length $2$, Corollary \ref{gaps} tells us that at least $n_{2}$ of the $A_{i}$ are not elementary cubes (eg. the ones after the gaps of length $2$). Now, the edges of the bricks which lie on $P$ have total length 
\begin{equation*}
2\sum_{i=1}^d m_{i} - k - n_2
\end{equation*}
Also, there are $k + 2d$ such edges (as there are $2d$ corners in $P$). Hence the $A_{i}$ have between them $k (d-1) - 2d$ edges which are not on $P$ - and so we have that
\begin{equation*}
\sum_{i=1}^k\sum_{j=1}^d a_{ij} \geq 2\sum_{i=1}^d m_{i} - k - n_2 + k(d-1)-2d
\end{equation*}
Now, using Observation \ref{subsystems} and applying the inductive hypothesis in each $A_{i}$, we obtain
\begin{align*}
\left|H\right| &\geq 1+ \sum_{i=1}^k g_d(a_{i1},...,a_{id}) \geq \sum_{i=1}^k\sum_{j=1}^d a_{ij} - k(d-1) +1\\
&\geq 2\sum_{i=1}^d m_{i} - k - n_2 -2d + 1 \\
&= \left(\sum_{i=1}^d m_{i} - d+1\right)+\left(\sum_{i=1}^d m_{i} - k - n_2 -d\right).
\end{align*}
Since the first bracket is the bound we wish to establish for $H$, this is establishes the theorem unless 
\begin{equation*}
k+n_2> \sum_{i=1}^d m_{i} -d.
\end{equation*}
In this case, we observe that $H$ contains each of the $k$ bricks $A_i$, at least one further brick contained in each $A_i$ which is not an elementary cube, and $M$ itself. Since there are at least $n_2$ bricks $A_i$ which are not elementary cubes, we get that
\begin{equation*}
\left|H\right| \geq k+n_2+1\geq\sum_{i=1}^dm_i-d+2.
\end{equation*}
This shows that in this final case Theorem \ref{mincuboids} holds with strict inequality. \qed

\section{Classification of extremal examples for Theorem \ref{mincuboids}}
\label{extremal}
When we showed the upper bound for $g_d(m_1,...,m_d)$, we gave one example of a smallest possible maximal system. In this section we classify all such systems.
\begin{lemma}
\label{equality}
Let $M=[0,m_1]\times...\times [0,m_d]$ and let $H\in Max(I_M)$. If $\left|H\right|$ is minimal among members of $Max(I_M)$, $d\geq 2$ and $m_i \geq 2$ for at least $2$ choices of $i$, then $\left|Max(H)\right|=1$.
\end{lemma}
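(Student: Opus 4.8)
The plan is to induct on $d$ and, inside that, on $\sum_{i=1}^d m_i$, shadowing the proof of Theorem~\ref{mincuboids} but now recording when its inequalities are \emph{strict}. Since the construction of Section~\ref{upperbound} realises a member of $Max(I_M)$ of size $\sum_i m_i-(d-1)$ whose only maximal proper brick is $M$ shrunk by one in a single coordinate, it suffices to show that whenever the hypotheses hold and $|Max(H)|\ge 2$ one has $|H|>\sum_i m_i-(d-1)$, i.e.\ $H$ is not of minimal size. I would first reproduce the reduction from the theorem: if some $m_i=1$, then the hypothesis forces at least two of the remaining $d-1$ side lengths to be $\ge 2$, the whole problem restricts (by a size-preserving bijection) to the $(d-1)$-dimensional face $x_i=0$, and the inductive form of the present lemma applies; the base case $d=1$ is vacuous, since the hypothesis cannot hold. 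So from now on every $m_i\ge 2$.

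If $|Max(H)|\ge 2$ we are in Case~2 or Case~3 of the proof of Theorem~\ref{mincuboids}. In Case~2 the two displayed lower bounds, read with $m_i\ge 2$ for all $i$ and $d\ge 2$, are already strict: the surplus over $\sum_i m_i-(d-1)$ is $\ge d-1$ in the subcase $r_1=0$ and $\ge 2d-3$ in the subcase $1<r_1<r_2<m_1-1$, both $\ge 1$. Hence $H$ is not minimal. In Case~3 the displayed estimate reads $|H|\ge\bigl(\sum_i m_i-d+1\bigr)+\bigl(\sum_i m_i-k-n_2-d\bigr)$, supplemented by $|H|\ge k+n_2+1$; together these force $|H|>\sum_i m_i-(d-1)$ unless $k+n_2=\sum_i m_i-d$ exactly.

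The essential task is therefore to rule out this boundary subcase of Case~3, and this is where I expect the work to lie. Equality must then hold in every step, which pins the configuration down rigidly: $Max(H)=\{A_1,\dots,A_k\}$ (every maximal proper brick meets $P$); $H=\{M\}\cup\bigcup_i H_i$ with each $H_i$ a \emph{minimal} maximal system inside $A_i$; for every $i$ and $j$, if $A_i$ has no edge in direction $j$ lying on $P$ then its side length in direction $j$ is $1$; and (since the count ``$k+2d$ edges on $P$'' must be tight) the gap/side-length identity $\sum_i g_d(A_i)=k+n_2$ holds with exactly $n_2$ of the $A_i$ non-cubes. Using in addition that in Case~3 each brick has at most one edge in each direction on $P$ (two would force full transverse extent, hence dividing $M$), one deduces that each $A_i$ is either a unit cube or a domino hugging one of the $2d$ edges of $P$. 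By the corollary that every vertex of $M$ lies in a member of $Max(H)$, together with disjointness, every vertex of $M$ carries a distinct such brick, so $k\ge 2^d$; and since a unit cube at a vertex would otherwise violate Corollary~\ref{nhds}, all $m_j\ge 3$. To finish I would exhibit a brick that may be added to $H$: choose a vertex $w$ of $M$ not on $P$ (for $d\ge 3$ these exist; for $d=2$ all four vertices lie on $P$ but one argues identically with a corner brick), note that by the forced structure the member of $Max(H)$ containing $w$ is the unit cube $U$ at $w$, and enlarge $U$ to full extent in a suitable direction $\ell$, obtaining $B$ with $B\subsetneq M$ and $B\notin H$. The only way $B$ could fail to be comparable with or disjoint from some $A_i$ is if $A_i$ overlaps the transverse unit cell of $U$ without being contained in it while being disjoint from $U$ only through coordinate $\ell$; being a unit cube or domino, such an $A_i$ would have a coordinate equal to $1$ or to $m_j-1$ for some $j\ne\ell$ — forbidden by Corollary~\ref{nhds}, after, if necessary, re-selecting $\ell$ among the $d$ coordinates or $w$ among the available vertices. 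Then $H\cup\{B\}$ contradicts maximality.

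Thus the main obstacle is the verification in the last paragraph that the enlarged brick meets no obstruction: this is where Corollaries~\ref{nhds} and~\ref{gaps} and the precise staircase geometry of $P$ must be used with care, and where the equality conditions extracted from Case~3 do all the real work. Everything else is bookkeeping — rereading the proof of Theorem~\ref{mincuboids} to see which inequalities become strict under the stronger hypotheses — and it is exactly in that last paragraph and in the $m_i=1$ reduction that the assumption ``$m_i\ge 2$ for at least two values of $i$'' is genuinely needed.
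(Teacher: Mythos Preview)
Your plan mirrors the paper's approach---re-examining Cases~2 and~3 of Theorem~\ref{mincuboids} for strictness---and correctly isolates the boundary subcase $k+n_2=\sum_i m_i-d$ of Case~3 as the only real issue. The paper, however, does \emph{not} try to force $m_j\ge 3$ out of the equality constraints; instead it splits off the situation where some $m_j=2$ and handles it by a completely different slicing argument (decomposing $H$ into layers $H_1,H_2,H_{12}$ according to the last coordinate and applying Theorem~\ref{mincuboids} in dimension $d-1$). Your route---showing that Case~3 equality itself forces every $m_j\ge 3$---would neatly avoid that detour, and the claim is in fact true, but your one-line justification (``a unit cube at a vertex would violate Corollary~\ref{nhds}'') is incomplete: it says nothing when the brick at a vertex is a domino rather than a unit cube. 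The missing step is that if, say, $m_d=2$, then the brick at the vertex $(m_1,0,\dots,0)$ must be a domino in direction $d$ (any other shape has $r_{d,2}=1=m_d-1$, violating Corollary~\ref{nhds}), and then your constraint~3 forces that domino to have a long edge on one of the two direction-$d$ edges of $P$, which for $d\ge 3$ is impossible since those edges sit at $(0,\dots,0,\cdot)$ and $(m_1,\dots,m_{d-1},\cdot)$.

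Once $m_j\ge 3$ for all $j$ and $d\ge 3$, the contradiction is much simpler than what you sketch. The equality constraints force every elementary cube contained in any $A_i$ to lie on an edge of $P$; for a vertex $w$ off $P$, the cube $C_w$ lies on no such edge, and with $m_j\ge 3$ one checks that $C_w$ is actually disjoint from every $A_i$. So $H\cup\{C_w\}$ is a larger system of brick islands, contradicting maximality. There is no member of $Max(H)$ containing $w$ at all---that \emph{is} the contradiction---so your assertion that ``the member of $Max(H)$ containing $w$ is the unit cube $U$ at $w$'' is misplaced, and the whole enlargement-to-$B$ manoeuvre is unnecessary. Finally, your parenthetical for $d=2$ (``one argues identically with a corner brick'') is too thin to stand on its own; the paper simply cites Lengv\'arszky's result for that case, and your Case~3 machinery does not obviously transfer since all four vertices of a rectangle lie on $P$.
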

\begin{proof}
We first note that if $m_d=1$, maximal systems of brick islands in $M$ are precisely those in $M'=[0,m_1]\times...\times [0,m_{d-1}]$. Using this, we can work instead in the cuboid given by projecting in all dimensions where the side length of $M$ is $1$. So we shall assume that $m_i\geq 2$ for each $1\leq i\leq d$.
When $d=2$, this was proved by Lengv\'{a}rszky \cite{Len1}. Examining the proof of Theorem \ref{mincuboids} we note that for equality to hold we must have the following constraints on $H$:
\begin{itemize}
\item $Max(H)=\{A_i:1\leq i \leq k\} $
\item Every $A_i$ is an elementary cube or a brick with all sides of length $1$ except for one side of length $2$.
\item If some side length $a_{ij}$ of $A_i$ is greater than $1$, then some side of $A_i$ that lies along $P$ must be in direction $j$.
\end{itemize}
From these last two constraints we can deduce that every elementary cube contained in some $A_i$ lies on an edge of $P$. If $d\geq 3$ and $m_i\geq 3$ for all $1\leq i\leq d$, let $v$ be some vertex of $M$ which is not on $P$, and $C_v$ be the elementary cube which contains it. Then no brick $A_i$ intersects $C_v$. However, by the first constraint there are no other members of $Max(H)$; hence we can add $C_v$ to $H$, contradicting the maximality of $H$. This contradiction establishes the lemma whenever $d\geq 3$ and $m_i\geq 3$ for all $1\leq i\leq d$.

So we may assume that $d\geq 3$, and that $m_d=2$. In this case we define sets of bricks $H_1$, $H_2$, $H_{12}$ in $d-1$ dimensions by writing
\begin{equation*}
H = \{R \times [0,1]:R\in H_1\}\cup\{R \times[1,2]:R\in H_2\}\cup \{R \times [0,2]:R\in H_{12}\}
\end{equation*}
We note that no element of $H_1$ intersects an element of $H_2$.  Now, $H_1\cup H_2\cup H_{12}$ is a maximal system of brick islands in the $(d-1)$-dimensional cuboid $M'=[0,m_1]\times...\times [0,m_{d-1}]$, and so by Theorem \ref{mincuboids}
\begin{equation*}
\left|H_1 \cup H_2 \cup H_{12}\right|\geq\sum_{i=1}^{d-1} m_i -d+2= \sum_{i=1}^{d} m_i -d.\end{equation*}
Thus if we have equality in Theorem \ref{mincuboids} for $H$, then there is at most one intersection between any of $H_1$, $H_2$ and $H_{12}$. We observe that any minimal member of $H_{12}$ must be in $H_1\cup H_2$, and any maximal member of $H_1\cup H_2$ must be in $H_{12}$. So for equality to hold, $H_{12}$ has a unique minimal element $R$, which is also the unique maximal element of $H_1\cup H_2$. We also know that $H_{12}$ has a unique maximal element $M'$ corresponding to $M\in H$, and so the bricks in $H_{12}$ must be nested. If $\left|H_{12}\right|\geq 2$, then the second largest element of $H_{12}$ corresponds in $H$ to the unique element of $Max(H)$; if $\left|H_{12}\right|=1$, then $R$ in $H_1\cup H_2$ corresponds in $H$ to the unique element of $Max(H)$.
\end{proof}

Using this lemma, we can classify the minimal elements of $Max(I_M)$. A system of brick islands is a minimal element of $Max(I_M)$ if and only if it can be obtained by the following procedure:
\begin{itemize}
\item Take any brick $R$ in $M$ such that every side length of $R$ is $1$ except for one dimension, in which it is $r$.
\item Take any system of $r$ brick islands in $R$ (the largest of which is $R$ itself).
\item All other bricks are nested, with $R$ being the smallest and $M$ being the largest, such that each brick extends the last by $1$ in one direction.
\end{itemize}
We count the bricks in such a system. There are $r$ bricks within $R$, and $\sum_{i=1}^d m_i -(r +d-1)$ to extend each dimension to $m_i$, giving a system of the required size. We prove that these are all the minimal elements of $Max(I_M)$ by induction on $\sum_{i=1}^d m_i$. The base case is when $M$ has at most one dimension of size at least $2$, in which case we can take $R=M$. If $m_i>1$ holds for at least $2$ of the $m_i$, then $Max(H)$ has a unique element $H_{max}$ by lemma \ref{equality}. Applying the induction hypothesis in $H_{max}$, we obtain the result for $M$.

\section{Proof of Theorem \ref{mincubes}}
\label{mincubeproof}
Before we prove our results about systems of cubic islands, we observe the obvious analogue of Observation \ref{subsystems} for cubic islands.
\begin{obs}
\label{cubesubsystems}

Let $C=[0,m]^d$, $H\in Max(I_C)$ and $R\in H$. Then those bricks in $H$ which are contained in $R$ form a set in $Max(I_R)$. Also, $C$ itself must be in $H$. In particular, if $R_1,...,R_k$ are the members of $Max(H)$, where $R_i$ has side length $r_i$
\begin{equation*}
1+\sum_{i=0}^k g'_d(r_i)\leq \left|H\right| \leq 1+\sum_{i=0}^k f'_d(r_i)
\end{equation*}
\end{obs}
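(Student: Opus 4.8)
The plan is to follow the proof of Observation~\ref{subsystems} essentially verbatim, checking at each step that the restriction to cubes causes no difficulty — every brick constructed in that argument is in fact a cube, so nothing is lost. I would split the work into the three assertions of the statement.

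First I would note that $C\in H$: otherwise, since every brick of $H$ lies inside $C$, the family $H\cup\{C\}$ is still a system of cubic islands in $C$, contradicting the maximality of $H$ in $I_C$. Next, for $R\in H$ set $H_R=\{S\in H:S\subseteq R\}$. This is clearly a system of cubic islands in $R$ (it inherits the defining property from $H$, and each of its members is a cube contained in $R$), and I claim $H_R\in Max(I_R)$. If some cube $T\subseteq R$ with $T\notin H_R$ could be added to $H_R$, I would show that $T$ can be added to $H$ as well: given any $S\in H$, since $R\in H$ one of $S\subseteq R$, $R\subseteq S$, $R\cap S=\emptyset$ holds; in the first case $S\in H_R$ and $T$ is nested with or disjoint from $S$ by hypothesis, in the second $T\subseteq R\subseteq S$, and in the third $T\cap S\subseteq R\cap S=\emptyset$. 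Hence $H\cup\{T\}$ would be a system of cubic islands, contradicting the maximality of $H$.

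For the displayed inequality, I would observe that the members $R_1,\dots,R_k$ of $Max(H)$ are pairwise disjoint (two that met would be nested, contradicting that each is maximal in $H\backslash\{C\}$), and that every brick of $H$ other than $C$ lies in exactly one $R_i$. Thus $H$ is the disjoint union of $\{C\}$ with the sets $H_{R_i}$, so $|H|=1+\sum_{i=1}^k|H_{R_i}|$; by the previous paragraph each $H_{R_i}\in Max(I_{R_i})$ with $R_i$ a cube of side $r_i$, so $g'_d(r_i)\le|H_{R_i}|\le f'_d(r_i)$, and summing over $i$ gives the claimed bounds (the degenerate case $k=0$, i.e. $H=\{C\}$, being consistent with both sides). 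No step here is a genuine obstacle; the only points needing slight care are the three-case split that lets one push $T$ from $H_R$ up to $H$, and confirming that the $R_i$ are truly disjoint rather than merely pairwise incomparable — both are immediate once one recalls that $R$ itself belongs to $H$.
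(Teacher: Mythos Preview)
Your argument is correct in every detail. The paper itself offers no proof of this Observation (nor of Observation~\ref{subsystems}, which it invokes as the model); it simply states it as the ``obvious analogue'' of the brick case, so your write-up is precisely the routine verification the paper suppresses.
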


Now we prove Theorem \ref{mincubes}, on the minimal size of maximal systems of cubic islands. We wish to show that $g'_d(m)=m$. We first note that $g_d'(m)\geq m$, as a sequence of $m$ nested cubes is maximal in $C=[0,m]^d$. To prove the upper bound for $g'_d(m)$ we will proceed by induction on $m$. For $m\leq 2$ , the theorem is trival. For $d=2$, the theorem was proved in \cite{Len2}. So we shall assume $d\geq 3$ and $m\geq 3$, and that the theorem holds $\forall m' \leq m$. Given $C=[0,m]^d$, and $H\in Max(I_C)$, our task is to show that $\left| H\right| \geq m$. We proceed in three different ways, depending on the size of the largest element of $H$.
\subsection{Case 1: The system $H$ contains an element of size $m-1$ }
In this case, the result follows immediately from the inductive hypothesis, together with Observation \ref{cubesubsystems}.

\subsection{Case 2: The largest element of $H$ is of size $m-2$}
Denote the element of $H$ of size $m-2$ by $R$. Consider the bottom left corner of $R$, with coordinates $(r_1,...,r_d)$, with each of the $r_i$ in $\{0,1,2\}$. Note that they are not all $1$ - if they were we could extend the system $H$ by adding in a cube of size $[0,m-1]^d$. Now, for $1\leq i \leq d$, set
\begin{equation*}
a_i = \left\{
\begin{array}{l l}
  0\text{ if }r_i = 2\\
  m_i-1\text{ if } r_i = 0 \text{ or }1
\end{array}
\right.
\end{equation*}
Then we note that the elementary cube based at $(a_1,...,a_m)$ does not intersect $R$. This shows that, by the maximality of $H$, there is at least one cube $R'$ other than $R$ in $Max(H)$. Thus $H$ contains $C$, $R'$ and $(m-2)$ cubes contained in $R$ (by Observation \ref{cubesubsystems}, and the inductive hypothesis applied to $R$). Consequently $\left|H \right| \geq m$, as required.

\subsection{Case 3: All elements of $H$ are of size at most $m-3$}
Consider the path $P$ as in the proof of Theorem $1$;
\begin{align*}
P = \{(0,0,...,0,x_i,m,...,m):1\leq i \leq d, 0\leq x_i\leq m\}\\
 \cup \{(m,m,...,m,x_i,0,...,0):1\leq i \leq d, 0\leq x_i\leq m\}
\end{align*}
 Given two points $p_1$ and $p_2$ on $P$ which are seperated on $P$ by at least two vertices of $C$, and elementary cubes $C_1$ and $C_2$ containing $p_1$ and $p_2$ respectively, we note that $p_1$ and $p_2$ differ by $m$ in (at least) $1$ dimension. Hence no cube of side at most $m-3$ can intersect both $C_1$ and $C_2$.

Let $A_1,...A_k$ be those cubes in $Max(H)$ which contain a point of the form $p+(c_1,...,c_d)$, with $p\in P$ and $\left| c_i\right|\leq 1$ for $1\leq i\leq d$. We project $A_i$ on to those points $p\in P$ for which $A_i$ has a point of this form. Then each $A_i$ is projected onto at most $2$ edges of $P$ (which occurs precisely when $A_i$ is at most $1$ away from a corner of $P$ in every direction). The gaps between adjacent projections are at most $2$, very similarly to in the cuboid case - if there is a gap of $3$, we can put an elementary cube on $P$ in the middle of it to extend $H$. We may also get extra gaps at the $2d$ corners of $P$, as the cubes closest to the corners need not project into them. These gaps have size at most $2$. Writing $a_i$ for the side length of the cube $A_i$, this gives
\begin{equation*}
2\sum_{i=1}^k a_i + 2k +4d\geq 2md
\end{equation*}
Thus at least one of the following must hold:
\begin{align}
k&\geq m-1 \label{manycubes}\\
\sum_{i=1}^k a_i&\geq m-1 \label{longsides}\\
2d - 4 &\geq m(d-2) \notag
\end{align}
 However, the last inequality does not hold for any pairs of integers $m\geq 3$ and $d\geq 3$. If \ref{manycubes} holds, then we note that $\left| H \right| \geq k+1$, as each of the $A_i$ and $C$ itself are in $H$. If \ref{longsides} holds, we use Observation \ref{cubesubsystems} and apply the inductive hypothesis to each $A_i$ to obtain $\left|H\right| \geq \sum_{i=1}^k a_i +1$. In either case, we get that $\left| H \right| \geq m$. \qed

\section{Proof of Theorem \ref{maxcubes}}
\label{maxcubeproof}
Finally, we prove Theorem \ref{maxcubes}, on the maximal size of a system of cubic islands. As in the setup of the theorem, let $C = [m]^d$ and $H\in Max(I_C)$. Then our task is to show that $\left| H \right| \leq \frac{(m+1)^d-1}{2^d-1}$, and to demonstrate an $H$ for which equality holds when $m=2^k-1$. We do the latter first. For $C_k=[2^k-1]^d$ we define systems of cubic islands $H_k$ recursively. $C_1$ has $H=\{C_1\}$. To form $H_k$, divide $C_k$ into $2^d$ subcubes of side $2^{k-1}-1$ using $d$ hyperplanes passing through the middle of the cube. Place a copy of $H_{k+1}$ in each of these subcubes, and add $C_k$ to obtain $H_k$.  This gives
\begin{align*}
\left|H_k\right| &= 2^d \left|H_{k-1}\right| +1\\
&= 2^d\frac{2^{(k-1)d}-1}{2^d-1}+1\\
&= \frac{2^kd-1}{2^d-1}
\end{align*}
as required.

To show $\left| H \right| \leq \frac{(m+1)^d-1}{2^d-1}$, we use induction on $m$. The result is trivial for $m=1$. Given $m\geq 2$, we split into $2$ cases, depending on the size $Max(H)$.
\subsection{Case 1: $\left|Max(H)\right|=1$ }
Here $Max(H)$ has a unique member $R$ of side length $m-1$. Applying the induction hypothesis in $R$,
\begin{equation*}
\left|H\right| \leq 1+ \frac{m^d-1}{2^d-1}\leq \frac{(m+1)^d-1}{2^d-1}
\end{equation*}
implying the assertion of the theorem.
\subsection{Case 2: $\left|Max(H)\right| \geq 2$}
In this case, $Max(H)$ has no elements of side length greater than $m-2$. Now, order the vertices of $C$ and consider each in turn. If the elementary cube $C_v$ which contains the vertex $v$ intersects no element of $Max(H)$, then add $C_v$ into $H$. If $C_v$ intersects some element $R$ of $Max(H)$ but is not contained in it, then move $R$ into the corner, together with every cube in $H$ that it contains. Note $R$ cannot have contained any other vertex of $C$, as it has side length at most $m-2$. After applying this process to every vertex, we have a family $H'$ with $\left|H'\right|\geq \left|H\right|$, such that every vertex of $C$ is occupied by a different element of $Max(H')$. Hence $\left|Max(H')\right|\geq 2^d$. Now we use the same argument as applied in \cite{Cze} and \cite{Len2}. 

Suppose $A_1,...,A_k$ are the elements of $Max(H')$. Then extend each $A_i$ by $\frac{1}{2}$ in every direction. The interiors of the extended cubes are disjoint, and are contained within an extended version of $C$ of side $m+1$. Thus their total volume $\sum_{i=1}^k (a_i+1)^d$ is at most $(m+1)^d$. We write $n(A_i)$ for the number of cubes of $H'$ which are contained in $A_i$, and $a_i$ for the side length of $A_i$. Using Observation \ref{cubesubsystems}, and applying the inductive hypothesis in each $A_i$, we get that
\begin{align*}
\left|H\right|\leq\left|H'\right| &\leq 1+\sum_{i=1}^k n(A_i) \leq 1+\sum_{i=1}^k\frac{(a_i+1)^d-1}{2^d-1}\\
 & \leq 1+\frac{(m+1)^d}{2^d-1} -\frac{2^d}{2^d-1} =\frac{(n+1)^d-1}{2^d-1}
\end{align*}
This is exactly the bound we want on $\left| H\right|$, and our proof is complete. \qed
\section{Further work}
As mentioned in \cite{Len2}, we could consider the problem of cubic islands in a cuboid; the members of our system $H$ would be cubes, while $M$ remains a cuboid with arbitrary sides. While we have got a best polynomial upper bound on $f_d'(m)$, we have not found a reasonable lower bound, and this is another possible extension.

\end{document}